
\documentclass{amsart}

\usepackage{fancyvrb}
\usepackage{hyperref}
\usepackage{enumerate}
\usepackage{amssymb}





\newtheorem{theorem}{Theorem}[section]
\newtheorem{lemma}[theorem]{Lemma}
\newtheorem{proposition}[theorem]{Proposition}

\theoremstyle{definition}

\theoremstyle{remark}
\newtheorem{remark}[theorem]{Remark}

\numberwithin{equation}{section}

\def\cD{ {\mathcal D} }

\def\cH{ {\mathcal H} }

\def\cK{ {\mathcal K} }

\def\cP{{\mathcal P}}

\def\cT{{\mathcal T}}

\def\cZ{ {\mathcal Z} }

\def\R{{\mathbb R}}

\def\hcK{\hat{\cK}}

\def\smatn{\mathbb S_n(\mathbb R)}

\def\smatng{\mathbb S_n(\mathbb R^g)}
\def\smatmg{\mathbb S_m(\mathbb R^g)}

\def\smatn{\mathbb S_n(\mathbb R)}

\def\Rx{\mathbb R\langle x \rangle}

\newcommand{\df}[1]{{\bf{#1}}{\index{#1}}}

\makeindex

\begin{document}
\title[Free and quasi-convex]{Quasi-Convex Free Polynomials}
\author{S. Balasubramanian and S. McCullough${}^1$}
\thanks{${}^1$ Research supported by NSF grants DMS 0758306 and 1101137}
\address{Department of Mathematics and Statistics,
Indian Institute of Science Education and Research (IISER) - Kolkata, Mohanpur Campus,
Nadia District, Pin: 741246, West Bengal, India.}
\email{bsriram@iiserkol.ac.in}
\address{Department of Mathematics\\
The University of Florida\\
Box 118105\\
Gainesville, FL 32611-8105 }
\email{sam@ufl.edu}
\keywords{free polynomials, quasi-convex, free real algebraic geometry}

\subjclass[2010]{15A24, 47A63, 08B20}

\begin{abstract}
 Let $\Rx$ denote the ring
 of polynomials in $g$ freely non-commuting variables
  $x=(x_1,\dots,x_g)$.
 There is a natural involution $*$ on $\Rx$
 determined by $x_j^*=x_j$ and $(pq)^*=q^* p^*$
 and a free polynomial $p\in\Rx$ is symmetric if it is invariant
 under this involution. If $X=(X_1,\dots,X_g)$ is a $g$ tuple of
  symmetric $n\times n$ matrices, then the evaluation $p(X)$
  is naturally defined and further $p^*(X)=p(X)^*$.
  In particular, if $p$ is symmetric, then $p(X)^*=p(X)$.
  The main result
  of this article says if $p$ is symmetric, $p(0)=0$
 and for each $n$ and each symmetric positive
  definite $n\times n$
  matrix $A$ the set $\{X:A-p(X)\succ 0\}$ is convex, then
  $p$ has degree at most two and is itself convex,
  or $-p$ is a hermitian sum of squares.
\end{abstract}

\maketitle

\section{Introduction}
  Let $\Rx$ denote the ring of polynomials over $\mathbb R$
  in the freely non-commuting variables $x=(x_1,\dots,x_g)$.
  A $p\in\Rx$ is a \df{free polynomial} and is a finite sum
\[
  p=\sum p_w w,
\]
  over words $w$ in $x$ with coefficients $p_w\in\mathbb R.$
  The empty word, which plays the role of
  the multiplicative identity,  will be denoted $\emptyset$.

  For a word
\begin{equation}
 \label{eq:word}
  w=x_{j_1}x_{j_2}\cdots x_{j_k},
\end{equation}
 let
\[
  w^* = x_{j_k} \cdots x_{j_2}x_{j_1}.
\]
 The operation ${}^*$ extends naturally to an involution on
  $\Rx$ by
\[
  p^* = \sum p_w w^*.
\]

 Let $\smatng$ denote the set of $g$-tuples
  $X=(X_1,\dots,X_g)$ of $n\times n$ symmetric matrices.
 For a word $w$ as in \eqref{eq:word}, substituting $X_j$ for $x_j$ gives,
\[
 X^w = w(X)=X_{j_1} X_{j_2}\cdots X_{j_k}.
\]
  This evaluation extends to $\Rx$ in the obvious way,
\[
 p(X)=\sum p_w w(X).
\]
 Observe,  for $0\in\smatng$ that
  $p(0)=p_{\emptyset} I_n$, where $I_n$ is the $n\times n$ identity.

 The transpose operation ${}^*$ on matrices
 is compatible with the involution ${}^*$ on $\Rx$ in that
\[
  p(X)^*=p^*(X).
\]
  A polynomial $p\in\Rx$ is symmetric if $p=p^*$ and in this
  case $p(X)^*=p^*(X)=p(X)$ so that $p$ takes symmetric values.

 Let $\smatn$
 denote the collection of symmetric $n\times n$ matrices.
 Given $S\in\smatn$ the notations $S\succ 0$ and
  $S\succeq 0$ indicate that $S$ is positive definite and
  positive semidefinite respectively.
 A symmetric $p\in\Rx$ is
 {\it  quasi-convex}  if $p(0) = 0$ and
 for each $n$ and positive definite matrix
 $n\times n$ matrix $A$  the set
\[
 \cD(A) =\{X\in\smatng: A-p(X) \succ 0 \}
\]
 is convex.

 A symmetric polynomial $p$ is a  {\it (hermitian) sum of squares}
 if there exists an $m$ and $h_1,\dots,h_m\in \Rx$ such that
\[
  p=\sum h_j^* h_j.
\]
 Evidently such a $p$ is positive in the sense that for each $n$ and
 $X\in\smatng$,
\[
  p(X)\succeq 0.
\]
  The following theorem is the main result of this article.

\begin{theorem}
 \label{thm:main}
  If $p$ is quasi-convex, then either $-p$ is
  a sum of squares, or
  there exists a linear polynomial $\ell\in\Rx$
   and finitely many linear polynomials $s_j\in\Rx$ such that
\begin{equation}
 \label{eq:wow}
  p(x)= \ell(x) +\sum s_j^*(x) s_j(x).
\end{equation}
  Thus, $p$ is a hermitian sum of squares of linear polynomials  plus a linear term.

  Further, if there is an $N$ such 
  that for each $n\ge N$ there is a 
  $B\in\smatn$ such that $B\not\succeq 0$
  and $\{X\in\smatng: B-p(X)\succ 0\}$
  is convex, then $-p$ is a sum of squares
  if and only if $p=0$.
\end{theorem}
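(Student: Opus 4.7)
The forward direction, that $p=0$ implies $-p=0$ is trivially a sum of squares, is immediate. For the converse, assume $-p = \sum_{j=1}^m h_j^* h_j$ (with $h_j(0)=0$ since $p(0)=0$) and, for contradiction, suppose $p \neq 0$. The plan is to exhibit a single $n \ge N$ at which \emph{every} $B \in \smatn$ with $B \not\succeq 0$ has non-convex $\cD(B) = \{X \in \smatng : B - p(X)\succ 0\}$, contradicting the hypothesis.

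Because each $h_j$ has no constant term, $\sum h_j^* h_j$ is supported in degrees $\ge 2$, so the degree $d$ of $p$ is at least $2$ and equals $2D$ where $D = \max_j \deg h_j$; in particular $d$ is even. Writing $p_d$ for the homogeneous degree-$d$ part of $p$, one has
\[
 -p_d \;=\; \sum_{j : \deg h_j = D} (h_{j,D})^* h_{j,D},
\]
a nonzero sum of squares of homogeneous polynomials of degree $D$, with some $h_{j_0,D} \neq 0$. The key algebraic input is to produce $n_0$ and $\widetilde{X} \in \mathbb S_{n_0}(\R^g)$ with $-p_d(\widetilde X) \succ 0$; this reduces to finding $\widetilde X$ at which the nonzero free polynomial $h_{j_0,D}$ has injective (equivalently, invertible) matrix evaluation, a standard fact from noncommutative real algebraic geometry. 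Then $\bigcap_j \ker h_{j,D}(\widetilde X) \subseteq \ker h_{j_0,D}(\widetilde X) = \{0\}$, forcing $-p_d(\widetilde X)\succ 0$. Passing to the block-diagonal sum of $r$ copies of $\widetilde X$, we obtain $X_0 \in \mathbb S_{rn_0}(\R^g)$ with $-p_d(X_0) \succ 0$; choose $r$ large enough that $n := rn_0 \ge N$.

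Now let $B \in \smatn$ be arbitrary with $B \not\succeq 0$ and restrict $\cD(B)$ to the line $L = \{t X_0 : t \in \R\}$. On $L$,
\[
 B - p(tX_0) = t^d \bigl(-p_d(X_0)\bigr) + \sum_{i=2}^{d-1} t^i \bigl(-p_i(X_0)\bigr) + B.
\]
Since $d$ is even and $-p_d(X_0)\succ 0$ strictly, the leading term dominates for $|t|$ large, so $B - p(tX_0)\succ 0$ for $|t| > T$ (some $T > 0$). At $t=0$ the matrix is $B \not\succ 0$. Hence the slice $S := \{t \in \R : B - p(tX_0) \succ 0\}$ contains $\{t \in \R : |t|>T\}$ but excludes $0$. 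Any convex subset of $\R$ containing arbitrarily large positive and arbitrarily large negative values is all of $\R$, hence contains $0$; so $S$ is not convex, and as $\cD(B) \cap L = S$, neither is $\cD(B)$. This contradicts the existence of a suitable $B$ at this $n \ge N$, so $p = 0$.

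The main obstacle is the algebraic input: producing $\widetilde X$ with $-p_d(\widetilde X)\succ 0$. This rests on the standard (but non-trivial) fact that a nonzero element of $\Rx$ admits invertible matrix evaluations on a Zariski-dense open subset of $\mathbb S_{n_0}(\R^g)$ for $n_0$ sufficiently large. Once this is in hand, the remainder is a one-parameter analysis along a line through the origin that exploits the evenness of $d$ so that the leading term in $t$ dominates for $|t|$ large in both directions.
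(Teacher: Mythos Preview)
Your proposal addresses only the ``Further'' clause of the theorem. For that part your argument is correct and is essentially the paper's own: both produce an $X$ with $-p_d(X)\succ 0$ (the paper applies the Guralnick--Small lemma directly to the top homogeneous part of $-p$, you apply the same lemma to a single factor $h_{j_0,D}$ and deduce positivity of the sum), pass to a direct sum to reach a size $n\ge N$, and then note that $\pm tX\in\cD(B)$ for large $t$ while $0\notin\cD(B)$, contradicting convexity of $\cD(B)$.

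The first paragraph of the theorem --- that a quasi-convex $p$ is either the negative of a sum of squares or else has the form $\ell+\sum s_j^*s_j$ with $\ell$ and the $s_j$ linear --- is the main content, and your proposal does not touch it. In the paper this occupies Sections~\ref{sec:DDH}--\ref{sec:DSLI}: one introduces the free Hessian $p''(x)[h]$ and, assuming $-p$ is \emph{not} a sum of squares, locates boundary points $(X,v)$ of suitable $\cD(A)$ with $\dim\ker(A-p(X))=1$; a separating-hyperplane argument (Proposition~\ref{prop:maintool}) then forces $\langle p''(X)[H]v,v\rangle\ge 0$ on a codimension-one subspace of the clamped tangent plane $\cT(X,v)$; a direct-sum construction (Lemma~\ref{lem:ind}) manufactures such an $(X,v)$ for which additionally $\{w(X)v:|w|\le d-1\}$ is linearly independent; and finally a middle-matrix lemma (Lemmas~\ref{lem:cZ} and \ref{lem:dhm2}) forces the middle matrix $\mathcal Z$ of $p''$ to be positive semidefinite, which by its antidiagonal structure pins $p$ down to degree two of the asserted form. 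None of these ingredients appears in your proposal, so as a proof of Theorem~\ref{thm:main} it is incomplete.
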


\begin{remark}\rm
 It is easy to see that if $p$ is a hermitian sum of squares
 of linear polynomials   plus a
 linear term, then $p$ is quasi-convex.
\end{remark}

\subsection{Related results and remarks}
  Theorem \ref{thm:main}
  falls within the emerging fields of
  free analysis and free semialgebraic geometry.
  Free semialgebraic geometry is, by analogy to the
  commutative case, the study of free polynomial inequalities.
  For instance there are now a number of free Positivstellensatze
  for which \cite{C} \cite{KS1} \cite{KS2}  \cite{HMP}
  are just a few references.
  In this regard, see also \cite{Sima} and its Proposition 17.
  There is also a theory
  of free rational functions.  Recent developments in this
  direction have been related to non-commutative
  multi-variate systems theory. See for instance
  \cite{BGM}.  Free rational functions actually
  appeared much earlier in the context of finite automata.
  See for instance \cite{Shu}.
   Issues of convexity in the
  context of free polynomials and rational functions
   naturally arise in systems theory problems
  \cite{HMPV,OHMP} and mathematically are related to the theory
  of operator spaces and systems and matrix convexity
  \cite{EW}.
  More generally, there is a theory of free analytic
  functions which arise naturally in several contexts,
  including free probability.  A sampling of references includes
   \cite{Voic} \cite{DKV-VV} \cite{Pop} and
  \cite{MS}.

  Some systems theory problems present as a free polynomial inequality
  (or more realistically as a system of matrix-valued
   free polynomial inequalities)
  involving two classes of free (freely non-commuting)
  variables, say the $a$ variables
  and the $x$ variables.  The $a$ variables are thought of as
  known (system or state) parameters and the $x$ variables unknowns.
  For a given free polynomial $q$,
  of interest is the case that, for each fixed $A$ in
  some distinguished collection of
  known parameters,  the inequality $q(A,x)\succ 0$
  is convex in $x$.  Thus this article  considers the simplest such
  case. Namely there is just one $a$ variable and
  $q(a,x)=a-p(x)$ for a polynomial $p$ in the variables $x$ alone.
  For comparison, a main result of \cite{HHLM}
  says, generally, if $q(A,x)$ is convex in $x$ for each fixed $A$, then
  $q(a,x) = L(a,x) + \sum h_j(a,x)^* h_j(a,x)$ where
   $L$ has degree at most one in $x$ and each $h_j$
  is linear in $x$.  The articles
  \cite{DHM1,DHM2,DHM3,HM} contain results for polynomials $f$
  whose positivity set - namely the set of those $X$ such that $f(X)\succ 0$ -
  is convex.

  A symmetric polynomial $p$ is \df{matrix convex} if for
  each $n,$  each pair $X,Y\in\smatng,$ and each $0\le t\le 1$,
\[
 p(tX+(1-t)Y)\preceq tp(X)+(1-t)p(Y).
\]
  The following Theorem, pointed out by the referee, generalizes
  the main result of \cite{HM2}.

\begin{theorem}
 \label{thm:jc}
   For a symmetric polynomial $p$ the following are equivalent.
 \begin{enumerate}[(i)]
  \item $p-p(0)$ has the form in Equation \eqref{eq:wow};
  \item $p$ is matrix convex;
  \item $\cD(A)$ is convex for every $A\in\smatng$;
  \item $p(x)-p(0)$ is quasi-convex and $p(0)-p(x)$ is not
   a  nonzero sum of squares.
 \end{enumerate}
\end{theorem}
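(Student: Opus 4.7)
The plan is to close the loop $(i)\Rightarrow(ii)\Rightarrow(iii)\Rightarrow(iv)\Rightarrow(i)$. Only the last arrow requires serious input; it is where Theorem~\ref{thm:main} enters. The other three directions are essentially mechanical once the definitions are unpacked.

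For $(i)\Rightarrow(ii)$, I would write $p(x)=p(0)+\ell(x)+\sum_j s_j^*(x)s_j(x)$. Because each $s_j$ is a degree-one free polynomial with real scalar coefficients, the evaluation $s_j(X)$ is an affine, symmetric matrix-valued function $L_j(X)$ of the tuple $X$, and $s_j^*(X)s_j(X)=L_j(X)^2$. The standard one-line identity
\[
 tL_j(X)^2+(1-t)L_j(Y)^2 - \bigl(tL_j(X)+(1-t)L_j(Y)\bigr)^2 = t(1-t)\bigl(L_j(X)-L_j(Y)\bigr)^2 \succeq 0
\]
then exhibits $X\mapsto L_j(X)^2$ as matrix convex. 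Since constants and affine polynomials are trivially matrix convex and sums preserve matrix convexity, $p$ is matrix convex. For $(ii)\Rightarrow(iii)$, if $X,Y\in\cD(A)$ then matrix convexity gives $p(tX+(1-t)Y)\preceq tp(X)+(1-t)p(Y)\prec A$, so $tX+(1-t)Y\in\cD(A)$.

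For $(iii)\Rightarrow(iv)$, restricting to positive definite $A$ and translating by $p(0)$ shows $q(x):=p(x)-p(0)$ is quasi-convex. For the second clause, suppose to the contrary that $-q$ is a \emph{nonzero} sum of squares. Then for every $n$ and every $B\in\smatn$ with $B\not\succeq 0$, choosing $A:=B+p(0)$ and applying (iii) shows that $\{X:B-q(X)\succ 0\}$ is convex. Thus the hypothesis of the ``further'' clause of Theorem~\ref{thm:main} holds for $q$ (with any $N$), forcing $q=0$ and contradicting $-q\ne 0$. This is the one step that needs care, and it is where I expect the main obstacle: one has to check that a witness $B\not\succeq 0$ is genuinely available for every $n$, which is precisely the reason (iii) is stated with $A$ ranging over \emph{all} symmetric matrices and not only the positive definite ones.

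Finally, for $(iv)\Rightarrow(i)$, apply Theorem~\ref{thm:main} to $q:=p-p(0)$. Either $-q$ is a sum of squares---in which case the second clause of (iv) forces $q=0$ and (i) holds trivially with $\ell=0$ and no $s_j$---or $q$ has the form in \eqref{eq:wow}, which is exactly (i). The only substantive use of the prior work of the paper is thus concentrated here and in the preceding implication, both relying on Theorem~\ref{thm:main}.
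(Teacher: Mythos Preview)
Your proof is correct and follows the same cycle $(i)\Rightarrow(ii)\Rightarrow(iii)\Rightarrow(iv)\Rightarrow(i)$ as the paper, invoking Theorem~\ref{thm:main} at exactly the same two places. The only difference is cosmetic: for $(i)\Rightarrow(ii)$ you supply the elementary square-of-an-affine-map identity directly, whereas the paper simply cites the equivalence $(i)\Leftrightarrow(ii)$ from \cite{HM2}; your loop is thus slightly more self-contained, since the hard direction $(ii)\Rightarrow(i)$ of \cite{HM2} is recovered via the cycle rather than quoted.
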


 A remark is in order about the definition of quasi-convex used here.
 A classical definition says that a function $f$ of several
 (commuting) variables is quasi-convex if each of its sub-level
 sets is convex. (The interested reader can work out
  the relationships between this definition of quasi-convex
  and the seemingly more popular one
  $f(tx+(1-t)y)\le \max\{f(x),f(y)\}$ for $0\le t\le 1$.)
  In considering a free analog here,  because of the role that positivity
  plays in the arguments, it was convenient to make the
  harmless normalization that $p(0)=0$ and then only require
  convexity of the (level) sets $\cD(A)$ for $A$ positive definite
  in the definition of quasi-convex.

\subsection{Reader's guide}
The remainder of this paper is organized as follows.
 Issues surrounding sums of squares are dealt
with in Section \ref{sec:SoS}.
Just as in the commutative case, convexity is related
to positivity of a Hessian.  The necessary definitions and basic results
appear in Section \ref{sec:DDH}.
Section \ref{sec:boundary} examines membership in the boundary of
the set $\cD(A)$ as well as consequences of the
convexity hypothesis. Theorem \ref{thm:main}
is proved in the final section, Section \ref{sec:DSLI}.

We thank the referee for many corrections, suggestions and
references related to this article.


\section{The Sum of Squares Case}
\label{sec:SoS}
  The following proposition dispenses with the
  alternative that $-p$ is a sum of squares.

\begin{proposition}
  If there is an $N$
  such that for each $n\ge N$ there exists a $B\in \smatn$
  such that $B\not\succeq 0$ and the set $\cD(B)$ is convex,
  then $-p$ is not a nonzero sum of squares.
\end{proposition}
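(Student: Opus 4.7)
The plan is to prove the contrapositive: if $-p$ is a nonzero hermitian sum of squares, then for some $n\ge N$ (indeed for all $n$ large enough), every $B\in\smatn$ with $B\not\succeq 0$ has $\cD(B)$ non-convex, which directly contradicts the hypothesis. Write $-p=\sum_{j=1}^m h_j^* h_j$ with not all $h_j$ zero; since $p(0)=0$ each $h_j(0)=0$. Let $d$ be the maximum of the degrees of the nonzero $h_j$, and for each $j$ with $\deg h_j = d$ let $q_j$ denote the degree-$d$ homogeneous part of $h_j$.

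The strategy is a scaling argument: find a single tuple $X_0$ so that both $tX_0$ and $-tX_0$ lie in $\cD(B)$ for all sufficiently large real $t$, and then use that their midpoint is $0\notin\cD(B)$. Pick $j_0$ with $q_{j_0}\ne 0$. Invoking the standard fact from free algebra that a nonzero free polynomial admits an invertible evaluation on a symmetric-matrix tuple of sufficiently large size, choose $n\ge N$ and $X_0\in\smatng$ such that $q_{j_0}(X_0)$ is invertible. Define
\[
Q(X_0) := \sum_{\deg h_j = d} q_j(X_0)^* q_j(X_0).
\]
The $j_0$-summand is already strictly positive definite, so $Q(X_0)\succeq \mu I$ for some $\mu>0$.

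Now fix any $B\in\smatn$ with $B\not\succeq 0$. Homogeneity of $q_j$ gives $h_j(tX_0)=t^d q_j(X_0)+O(t^{d-1})$ for $\deg h_j=d$ and $h_j(tX_0)=O(t^{d-1})$ otherwise, so
\[
-p(tX_0) = t^{2d} Q(X_0) + O(t^{2d-1})
\]
in operator norm. For $t$ large enough, $-p(tX_0)\succeq (\mu/2)t^{2d} I$, so $B-p(tX_0)\succ 0$ and $tX_0\in\cD(B)$. Because $q_j(-X_0)=(-1)^d q_j(X_0)$, one has $q_j(-X_0)^*q_j(-X_0)=q_j(X_0)^*q_j(X_0)$, so $-p(-tX_0)$ has the same top-degree coefficient $Q(X_0)$; the same bound therefore yields $-tX_0\in\cD(B)$ for $t$ large. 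The midpoint of $\pm tX_0$ is $0$, where $B-p(0)=B\not\succ 0$ (since $B\not\succeq 0$), so $0\notin\cD(B)$. Hence $\cD(B)$ is not convex, contradicting the hypothesis.

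The main obstacle is the input used at the start of the second paragraph, that a nonzero free polynomial (here the homogeneous $q_{j_0}$) has invertible evaluations on $\smatng$ once $n$ is large. Once that fact is in hand, the rest of the argument is routine bookkeeping on the leading power of $t$.
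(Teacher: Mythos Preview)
Your proof is correct and follows essentially the same strategy as the paper's: argue by contradiction, identify the top-degree homogeneous part of $-p$ as positive definite at some tuple (via the Guralnick--Small lemma), and then use the scaling/midpoint trick with $\pm tX_0$ to contradict convexity of $\cD(B)$ at $0$. The only cosmetic difference is that the paper takes the top homogeneous part $p_{2d}$ of $-p$ directly and observes it is itself a sum of squares, whereas you first fix a sum-of-squares decomposition $-p=\sum h_j^*h_j$ and then take leading parts $q_j$; your $Q(X_0)$ is exactly the paper's $p_{2d}(X_0)$.
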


\begin{proof}
  Arguing by contradiction, suppose $-p$ is a sum of squares.
  Consider the polynomial
\[
 -p(tx) = \sum_{j=1}^{2d} p_j(x) t^j,
\]
 where the $p_j$ are homogeneous of degree $j$ polynomials in
 the free variables $x$.  Since $-p$ is a sum of squares,
 $-p(tx)$ has even degree as a polynomial in $t$,
 and so, without loss of generality,
 we may assume that $p_{2d}$ is nonzero. Now $p_{2d}$
 is itself a sum of squares and hence it takes positive
 semi-definite values.
 If, on the other hand, $p_{2d}(X)$ is never positive definite,
 then $\det(p_{2d}(X))=0$  for all $n$ and $X\in\smatng$.
 An application of the Guralnick-Small
 lemma as found in \cite{HHLM} then gives the contradiction
 that $p_{2d}$ is the zero polynomial.
  Thus there is an $n$
 and an $X\in\smatng$ such that $p_{2d}(X)\succ 0$.
  By assumption, there is a $B\in\smatn$ such that $B\not\succeq 0$
  and $\cD(B)$
  is convex.  Choosing
 $t$ sufficiently large, it may be assumed that both
  $B-p(tX)$ and $B-p(-tX)$ are positive definite. In this
  case $tX$ and $-tX$ are in $\cD(B)$ and thus
  $0= \frac12 (tX+(-tX)) \in \cD(B)$, contradicting
  the assumption that $B$ is not positive semidefinite
  and completing the proof.
\end{proof}

\subsection{When $-p$ is not a sum of squares}
 Given $n$, let $\cK(n)$ denote the set of those $X\in\smatng$
 such that  $p(X)$ has a positive eigenvalue.
 The $\cK(n)$ are open sets, the issue is whether they are empty or not.
 In the free setting, and unlike in the case of several commuting variables,
  positive polynomials are sums of squares
  with \cite{S} \cite{Sima} and \cite{H} as a very small
  sampling of the references.    (For a reference which
  explicitly treats the case of the symmetric variables ($x_j^*=x_j$)
  used here see \cite{M}.)
  In particular, each $\cK(n)$ is empty
  if and only if $-p$ is a sum of squares.

 The conclusion of the following
 lemma will be used later in the proof of Theorem \ref{thm:main}
  when $-p$ is not a sum of squares.

\begin{lemma}
 \label{lem:reduce}
  Suppose $q\in\Rx$ and $-p$ is not a sum of squares.
  If
  $\det(q(X))=0$ for every $n$ and  $X\in \cK(n)$, then
  $q=0$.
\end{lemma}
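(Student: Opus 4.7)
The plan is to reduce to the Guralnick-Small lemma that was invoked in the preceding proposition. Since $-p$ is not a sum of squares, the discussion preceding the lemma guarantees that some $\cK(n_0)$ is nonempty; fix $X_0 \in \cK(n_0)$, so that $p(X_0)$ has a strictly positive eigenvalue.

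First, I would promote this single witness to witnesses in every larger dimension by direct sum. For $n \ge n_0$, set $\tilde X = X_0 \oplus 0_{n-n_0} \in \smatng$; because polynomial evaluation respects block diagonal structure, $p(\tilde X) = p(X_0) \oplus p_\emptyset I_{n-n_0}$, and its spectrum contains the positive eigenvalue of $p(X_0)$. Hence $\tilde X \in \cK(n)$, so $\cK(n) \neq \emptyset$ for every $n \ge n_0$.

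Next, observe that $\cK(n)$ is open in $\smatng$ by continuity of eigenvalues, while $X \mapsto \det(q(X))$ is a genuine polynomial function on the real vector space $\smatng$, since the entries of the matrix $q(X)$ are polynomials in the entries of $X$. A polynomial that vanishes on a nonempty Euclidean-open subset of a real vector space must vanish identically on that space, so the hypothesis $\det(q(X)) = 0$ on $\cK(n)$ upgrades to $\det(q(X)) = 0$ on all of $\smatng$ for every $n \ge n_0$.

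Finally, I would invoke the Guralnick-Small lemma as cited in \cite{HHLM} exactly as in the preceding proposition: vanishing of $\det(q(X))$ on $\smatng$ for all sufficiently large $n$ forces $q = 0$. The only real obstacle is the bookkeeping of the first step---ensuring that the padding populates $\cK(n)$ for an infinite tail of dimensions rather than a single value---since the second step is standard openness and the third is a direct citation.
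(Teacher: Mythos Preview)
Your argument is correct and mirrors the paper's: use direct sums to show $\cK(n)\ne\emptyset$ for all large $n$, use openness of $\cK(n)$ and the polynomial nature of $X\mapsto\det(q(X))$ to extend vanishing to all of $\smatng$, and then apply Guralnick--Small. One small caveat: the paper (like the preceding proposition) invokes Guralnick--Small under the hypothesis ``for every $n$,'' not merely ``for all sufficiently large $n$,'' and therefore inserts a one-line direct-sum reduction ($\tilde X=\oplus_{1}^{m} X$ gives $\det(q(X))^{m}=\det(q(\tilde X))=0$) to handle the small sizes $\ell<n_0$ before citing the lemma---you should either add that line or verify that the version in \cite{HHLM} already covers the tail-only hypothesis.
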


  Before beginning the proof of Lemma \ref{lem:reduce}
  we record the following simple fact.

\begin{lemma}
 \label{lem:easy}
 Suppose $q\in\Rx$ and $m$ is a positive integer.
  If $\cK(m)$ is nonempty and
  $\det(q(X))=0$ for all $X\in \cK(m)$, then
  $\det(q(X))=0$ for all $X\in\smatmg$.
\end{lemma}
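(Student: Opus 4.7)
The plan is to combine two elementary observations: that $\cK(m)$ is a nonempty open subset of the finite-dimensional real vector space $\smatmg$, and that a real polynomial vanishing on a nonempty open subset of a real affine space must vanish identically.

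First I would identify $\smatmg$ with $\R^{N}$, where $N = gm(m+1)/2$, via the independent entries of the symmetric matrices $X_1,\dots,X_g$. With this identification, the evaluation $X\mapsto q(X)$ is a polynomial map into the $m\times m$ real matrices, so each entry of $q(X)$, and hence $\det(q(X))$, is an ordinary real polynomial in the $N$ coordinates. Similarly $X\mapsto p(X)$ is polynomial and so continuous, and because the largest eigenvalue of a symmetric matrix depends continuously on the matrix, the function $X \mapsto \lambda_{\max}(p(X))$ is continuous on $\smatmg$. Consequently $\cK(m) = \{X : \lambda_{\max}(p(X)) > 0\}$ is the preimage of $(0,\infty)$ under a continuous map, hence open.

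By hypothesis $\cK(m)$ is nonempty and the polynomial $X \mapsto \det(q(X))$ vanishes on $\cK(m)$. The identity principle for real polynomials, which follows inductively from the one-variable fact that a nonzero real polynomial has only finitely many zeros, then forces $\det(q(X)) = 0$ for every $X \in \smatmg$. There is essentially no obstacle: the lemma reduces to the observation that a polynomial identity holding on a nonempty open subset of a real affine space holds globally, and the only non-triviality is the soft continuity argument showing $\cK(m)$ is open.
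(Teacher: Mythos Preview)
Your proof is correct and follows essentially the same route as the paper's: the paper notes beforehand that the $\cK(n)$ are open and then argues in two sentences that $X\mapsto\det(q(X))$ is a polynomial in the entries of $X$, hence vanishes identically if it vanishes on a nonempty open set. You have simply made explicit the continuity argument for openness of $\cK(m)$ and the identity principle, which the paper leaves implicit.
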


\begin{proof}
  The function $\smatmg \ni X\mapsto \det(q(X))$ is
  a polynomial in the entries of $X$. Hence, if it
  vanishes on an open set it must be identically zero.
\end{proof}

  Given integers $k,\ell$. Let $m=k+\ell$ and consider the subspace
\[
  S=\mathbb S_{k}(\mathbb R^g) \oplus \mathbb S_\ell(\mathbb R^g)
\]
 of $\smatmg$. Each tuple $X\in S$ is a direct sum
 $X=(Y_1\oplus Z_1,\dots, Y_g\oplus Z_g)$ where
 $Y=(Y_1,\dots,Y_g)\in \mathbb S_{k}(\mathbb R^g)$ and
  $Z=(Z_1,\dots,Z_g)\in \mathbb S_{\ell}(\mathbb R^g)$ and where
\[
  Y_j\oplus Z_j =\begin{pmatrix} Y_j &0 \\ 0 & Z_j\end{pmatrix}.
\]

\begin{proof}[Proof of Lemma \ref{lem:reduce}]
  Since $-p$ is not a sum of squares,
  by the remarks at the outset of this section,
  there is an $m$ and a $Y\in \cK(m)$ such that $p(Y)$ has a positive eigenvalue.
 First observe that for any positive integer $k$ and
 $X\in\mathbb S_k(\mathbb R^g)$ that $p(X\oplus Y)=p(X)\oplus p(Y)$
 too has a positive eigenvalue. Thus, $\cK(n)$ is nonempty
  for each $n\ge m$. Lemma \ref{lem:easy} now implies
 that $\det(q(X))=0$ for each $n\ge m$ and $X\in\smatng$.

  Now suppose $1\le \ell <m$. Since $n=m\ell \ge m$,
   $\det(q(X))=0$ for  all $X\in\smatng.$
 Given $X\in\mathbb S_\ell(\mathbb R^g)$, the tuple
\[
 \tilde{X}=\oplus_1^m X \in \smatng.
\]
 Thus $(\det(q(X)))^m=\det(q(\tilde{X}))=0$.  Hence $\det(q(X))=0$
  for every $X\in \mathbb S_\ell (\mathbb R^g)$.

 Since $\det(q(X))=0$ for every $n$ and $X\in\smatng$,
 the   Guralnick-Small lemma in \cite{HHLM} implies  $q$ is the zero
  polynomial.
\end{proof}


\section{Directional Derivatives and the Hessian}
 \label{sec:DDH}
 Given $p\in\Rx$ another set of freely non-commuting
 variables  $h=(h_1,\dots,h_g)$ and the real parameter $t$,
\begin{equation*}
   p(x+th)= \sum p_j(x)[h] t^j,
\end{equation*}
 where $p_j(x)[h]$ are polynomials in the
 variables $(x,h)=(x_1,\dots,x_g,h_1,\dots,h_g)$
  (which are of course freely non-commuting). The notation
 indicates the different role that these variables play. Indeed,
  observe that $p_j(x)[h]$ is homogeneous of degree $j$ in $h$.

  The polynomial $p_1(x)[h]$ is the {\it directional derivative}
  or simply the {\it derivative}  of  $p$
  (in the direction $h$) and is denoted $p^\prime(x)[h]$.
  The polynomial $2 p_2(x)[h]$ is the {\it Hessian} of $p$
  and is denoted by $p^{\prime \prime}(x)[h]$.

  Given $X\in\smatng$ and $v\in\mathbb R^n$, let
\begin{equation*}
  \cT(X,v)= \{H\in\smatng : p^\prime(X)[H]v=0 \} \subset \smatng.
\end{equation*}
  In the case that $(A-p(X))v=0$, the subspace $\cT(X,v)$ is the
 {\it clamped tangent plane} to $\cD(A)$ at $(X,v)$ \cite{DHM1}.
  In this case, if one chooses $H\in\cT(X,v)$, then
\[
 \langle (A-p(X+tH))v,v\rangle = - \frac{1}{2}
       t^2 \langle p^{\prime\prime}(X)[H]v,v\rangle
    +t^3 e(t),
\]
 for some polynomial $e(t)$. This identity, much as in the commutative
  case, provides a link between convexity and positivity
  of the Hessian of $p$.


\section{The boundaries}
\label{sec:boundary}
  Fix $p$ satisfying the hypothesis of Theorem \ref{thm:main}.
  In particular $p(0)=0$.

\begin{lemma}
   \label{lem:closint}
     Let $n$ and a positive
     definite $A\in\smatn$ be given. A given
     $X\in\smatng$ is in the boundary of $\cD(A)$
     if and only if $A-p(X)$ is positive semidefinite
     and has a non-trivial kernel.
 \end{lemma}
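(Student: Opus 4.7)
The plan is to handle the two implications separately, with the reverse direction being the substantive one. For the forward implication, $\cD(A)$ is the preimage of the open positive definite cone under the continuous map $Y \mapsto A - p(Y)$, hence open, so $\partial \cD(A) = \overline{\cD(A)} \setminus \cD(A)$. Taking a sequence $X_n \in \cD(A)$ with $X_n \to X$ yields $A - p(X) \succeq 0$ by continuity, and $X \notin \cD(A)$ forces $A - p(X)$ to be singular.

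For the reverse direction, assume $A - p(X) \succeq 0$ has nontrivial kernel, so $X \notin \cD(A)$; I must show $X \in \overline{\cD(A)}$. The key device is to perturb the matrix $A$ rather than $X$: set $A_\epsilon = A + \epsilon I$ for $\epsilon > 0$. Then $A_\epsilon \succ 0$, and by the quasi-convexity hypothesis $\cD(A_\epsilon)$ is convex. Both $0$ (since $p(0) = 0$ and $A_\epsilon \succ 0$) and $X$ (since $A_\epsilon - p(X) = (A - p(X)) + \epsilon I \succ 0$) lie in $\cD(A_\epsilon)$. Convexity therefore places the segment $\{sX : 0 \le s \le 1\}$ inside $\cD(A_\epsilon)$, and letting $\epsilon \to 0^+$ yields $A - p(sX) \succeq 0$ for every $s \in [0,1]$.

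To conclude, I examine the single-variable polynomial $f(s) = \det(A - p(sX))$. Since $f(0) = \det A > 0$, it is not identically zero and therefore has only finitely many real roots. Combined with $f \geq 0$ on $[0,1]$, there is a sequence $s_k \nearrow 1$ in $[0,1)$ with $f(s_k) > 0$; for each such $s_k$ the matrix $A - p(s_k X)$ is positive semidefinite and nonsingular, hence positive definite, so $s_k X \in \cD(A)$. The limit $s_k X \to X$ places $X \in \overline{\cD(A)}$, and combined with $X \notin \cD(A)$ this shows $X \in \partial \cD(A)$. The main obstacle I anticipated was upgrading ``positive semidefinite along the segment'' to ``positive definite at points arbitrarily close to $X$''; the determinant-polynomial argument is what resolves it cleanly, and it is precisely the quasi-convexity of $p$ (applied to the enlarged right-hand sides $A_\epsilon$) that feeds the segment into the picture in the first place.
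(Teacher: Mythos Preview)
Your proof is correct and follows essentially the same route as the paper's: the forward direction is the same continuity argument, and for the reverse direction both proofs perturb $A$ to a slightly larger positive definite matrix (you use $A+\epsilon I$, the paper uses $\tfrac{n+1}{n}A$), invoke quasi-convexity to trap the segment $\{sX:0\le s\le 1\}$ in the perturbed region, pass to the limit to get $A-p(sX)\succeq 0$, and then use that $\det(A-p(sX))$ is a nonzero polynomial in $s$ to find points $s_kX\in\cD(A)$ with $s_k\to 1$.
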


\begin{proof}
  Suppose that $X$ is in the boundary of $\cD(A)$. It follows that
  $A - p(X) \succeq 0$. It must be the case that $A - p(X)$ has
  a non-trivial kernel, because otherwise, $X\in \cD(A)$ which
  is an open set.

  To prove the converse, suppose $A-p(X)$ is positive semidefinite and has
  a non-trivial kernel. Clearly, $X \not \in \cD(A)$.
  For  positive integers $n$, let $A_n=\left(\frac{n+1}{n}\right) A$.
  Then $A_n-p(X)$ is positive definite. Hence
  $X\in \cD(A_n)$ and by convexity of $\cD(A_n)$, for a fixed $0<s<1$,
  $A_n-p(sX)\succ 0$. Letting $n$ tend to infinity, it follows that
  $A-p(sX)\succeq 0$.

  Consider the function $\psi: \R \rightarrow \R$ defined by
  $\psi(t) = \mbox{det}(A - p(tX))$.
  From what has already been proved, $\psi(t)\ge 0$
   for $0\le t<1$. Since $\psi(t)$ is a polynomial
  in the variable $t$, either it vanishes everywhere on $\R$ or
  only on a finite subset of $\R$. If $\psi(t)$ vanishes
  everywhere, then $\psi(0) = \det(A) = 0$ which contradicts the positive
  definiteness of $A$.  Thus $\psi(t)>0$ except
  for finitely many point in $(0,1)$ and
  thus there is a sequence $(s_n)$ from  $(0,1)$
  such that each $s_n X\in \cD(A)$ and
   $s_nX \rightarrow X$. Hence
    $X$ is in the boundary of $\cD(A)$.
\end{proof}

 Suppose that $X \in \smatng$ is in the boundary of $\cD(A)$ and $v \neq 0$
 is a vector in $\R^n$ such that $Av = p(X)v$.

\begin{proposition}
 \label{prop:maintool}
  With $X$ and $v$ as above, if the dimension of the kernel of
 $A-p(X)$ is one, then there exists a subspace $\cH$ of
 $\cT(X,v)$ of codimension one (in $\cT(X,v)$)
  such that, for $H\in \cH$,
 \begin{equation}
  \label{eq:poshess}
   \langle p^{\prime\prime}(X)[H]v,v\rangle \ge 0.
 \end{equation}
\end{proposition}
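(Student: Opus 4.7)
The plan is to combine convexity of $\cD(A)$ with analytic perturbation of the simple zero eigenvalue of $A-p(X)$. Fix $H\in\cT(X,v)$ and consider the symmetric matrix polynomial $M(t)=A-p(X+tH)$. Because the kernel of $M(0)$ is one-dimensional and spanned by $v$, the eigenvalue $0$ is a simple eigenvalue of $M(0)$; by Rellich's analytic perturbation theorem there is a real-analytic function $\mu(t)$ defined near $0$, with $\mu(0)=0$, tracking this eigenvalue. For all sufficiently small $t$ the remaining eigenvalues of $M(t)$ stay strictly positive, so the condition $X+tH\in\cD(A)$ is equivalent to $\mu(t)>0$.

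Next I would compute the second-order expansion of $\mu(t)$ via the Rayleigh--Schr\"odinger formulas. Because $H\in\cT(X,v)$ forces $M'(0)v=-p'(X)[H]v=0$, both the linear term $\langle M'(0)v,v\rangle$ and the usual quadratic cross-terms $\sum_k|\langle M'(0)v,v_k\rangle|^2/(0-\mu_k)$ (summed over the remaining eigenvectors $v_k$ of $M(0)$, whose eigenvalues $\mu_k$ are positive) vanish. What survives is
\[
  \mu(t) = -\tfrac{t^2}{2}\,\langle p''(X)[H]v,v\rangle/\|v\|^2 + O(t^3).
\]
If $\langle p''(X)[H]v,v\rangle<0$, then $\mu(t)>0$ for all sufficiently small $t\ne 0$, so both $X+tH$ and $X-tH$ lie in $\cD(A)$ for some small $t>0$. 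Convexity of $\cD(A)$ then places their midpoint $X$ in $\cD(A)$, contradicting $X\in\partial\cD(A)$ and the openness of $\cD(A)$. Hence $\langle p''(X)[H]v,v\rangle\ge 0$ for every $H\in\cT(X,v)$, and any codimension-one subspace $\cH\subseteq\cT(X,v)$ satisfies the proposition (the inequality in fact holds on all of $\cT(X,v)$).

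The main technical point is the quadratic expansion of $\mu(t)$ with this clean form. The one-dimensional kernel hypothesis is exactly what makes $0$ a simple eigenvalue of $M(0)$, separated from the rest of the spectrum, so that the classical Rayleigh--Schr\"odinger second-order formula applies. The condition $H\in\cT(X,v)$ then does double duty: it annihilates the first-order term and, because $M'(0)v=0$, simultaneously kills every cross term in the second-order correction, reducing $\mu''(0)$ to $-\langle p''(X)[H]v,v\rangle/\|v\|^2$. From that point the contradiction with openness of $\cD(A)$ is immediate.
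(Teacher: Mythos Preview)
Your argument is correct and in fact proves more than the proposition asserts: the inequality \eqref{eq:poshess} holds for every $H\in\cT(X,v)$, not merely on a codimension-one subspace. The route, however, is genuinely different from the paper's. The paper exploits convexity through a supporting hyperplane $\Lambda$ at the boundary point $X$: setting $\cH=\{H\in\cT(X,v):\Lambda(H)=0\}$, the whole line $t\mapsto X+tH$ stays outside $\cD(A)$, and a Schur-complement calculation on the block decomposition of $A-p(X+tH)$ relative to $[v]^\perp\oplus[v]$ forces $\langle p''(X)[H]v,v\rangle\ge 0$. You instead exploit convexity through the midpoint property: a second-order eigenvalue expansion (with the cross terms killed by $M'(0)v=0$) shows that a strict negative sign would place both $X\pm tH$ in $\cD(A)$, hence also $X$, contradicting openness. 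Your approach is a bit less elementary in that it invokes analytic perturbation theory (though for a simple eigenvalue the implicit function theorem already suffices), but it yields the sharper conclusion on all of $\cT(X,v)$; the paper's Schur-complement argument is entirely self-contained and, via the hyperplane, makes explicit why only a codimension-one piece is claimed.
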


\begin{remark}\rm
   Since $\cT(X,v)$ has codimension at most $n$ in $\smatng$,
  it turns out that $\cH$ will have codimension at most $n+1$
  in $\smatng$. In fact, a slight modification of the proof
  below shows that there is a subspace $\cK$ of $\smatng$
  of codimension at most $n$ for which equation \eqref{eq:poshess} holds.
  The key point is, with $\Lambda$ as in
  the proof of the proposition,  if $\Lambda(H)=0$,
  then $\langle p^\prime(X)[H]v,v\rangle =0$.

  Unlike a related argument in \cite{DHM1},
  the proof here does not rely on choosing a curve lying
  in the boundary of a convex set, thus eliminating the need
  for a corresponding smoothness hypothesis.
\end{remark}

\begin{proof}
 Since $X$ is in the boundary of the convex set
 $\cD(A),$  there is a linear functional
 $\Lambda:\smatng \to \mathbb R$ such that $\Lambda(Z)<1$
 for $Z\in \cD(A)$ and $\Lambda(X)=1$.
  The subspace
\[
  \cH=\{H \in \cT(X,v) : \Lambda(H)=0 \}
\]
  has codimension one in $\cT(X,v)$.

  Fix $H\in\cH$ and define $F:\mathbb R\to \smatn$ by
  $F(t)=A-p(X+tH)$.  Thus, $F$ is a matrix-valued polynomial in
  the real variable $t$.
  Let $[v]$ denote the one dimensional subspace of $\mathbb R^n$
   spanned by the vector $v$. Write $F(t)$, with respect to the orthogonal
    decomposition of $\mathbb R^n$ as $[v]^\perp \oplus [v]$, as
\[
  F(t)=\begin{pmatrix} Q(t) & g(t) \\ g(t)^* & f(t) \end{pmatrix},
\]
  where $Q$ is a square matrix-valued polynomial, $g$ is a vector, and $f$ is a
  scalar-valued polynomial.  The assumption
  $(A-p(X))v=0$ implies that $f$ and $g$ vanish at $0$.
  The further assumption that $H\in\cT(X,v)$ implies that
  $f$ and $g$ actually vanish to second order at $0$.
  In particular, there are polynomials $\beta$ and $\gamma$ such that
  $g(t)=t^2\beta(t)$ and $f(t)=t^2\gamma(t)$.

  Observe that
\[
  \gamma(0) = - \langle p^{\prime\prime}(X)[H]v,v\rangle.
\]
  Thus, to complete the proof of the theorem it suffices to
  use the choice of $\Lambda$ (and thus the convexity
  hypothesis on $\cD(A)$) and the
  assumption on the dimension of the kernel
  of $A-p(X)$  to show that $\gamma(0)\le 0$.
  Indeed, since the kernel of $A-p(X)$ has dimension one, it follows that
  $Q(0) \succ 0$. Therefore, there exists an
  $\epsilon >0$ such that if $|t| < \epsilon$, then
  $Q(t) \succ 0$. On the other hand,  $\Lambda(X + tH) =
  \Lambda(X) = 1$ for all $t$.  Thus $X + tH \not \in \cD(A)$
  which means
  $F(t) = A - p(X + tH) \not \succ 0$.
  Hence,  the Schur complement of $F$ is nonpositive; i.e.,
\[
  t^2[\gamma(t) - t^2 \beta^*(t)Q^{-1}(t)\beta(t)] \le 0.
\]
 It follows that, for $|t| <\epsilon$,
\[
 \gamma(t) \le t^2 \beta^*(t)Q^{-1}(t)\beta(t)
\]
 and hence $\gamma(0)\le 0$.
\end{proof}

 We end this section with  the following simple observation.

 \begin{lemma}
 \label{lem:generic}
 Suppose $X \in \smatng$ and $0\ne v\in\mathbb R^n$.
  If there is a $\lambda>0$ such that $p(X)v=\lambda v$,
 then there exists a positive definite
 $A \in \smatn$ such that $X$ is in the boundary of $\cD(A)$ and
 $v$ spans the kernel of $A-p(X)$.  Hence, for the triple
 $(A,X,v)$ the conclusion of Proposition
 \ref{prop:maintool} holds.

 Further, if,  for a given positive definite $A \in \smatn$, $X$ is in
  the boundary of $\cD(A)$, and $v$ is a nonzero
  vector such that $(A-p(X))v=0$,
   then for each $\epsilon >0$ there is a $A_\epsilon >0$
  such that $\|A-A_\epsilon\|<\epsilon$, $X$ is in the boundary
  of $\cD(A_\epsilon)$ and the kernel of $(A_\epsilon -p(X))$
  is spanned by $v$.
\end{lemma}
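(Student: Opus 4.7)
The plan is to handle the two assertions separately. In each case the strategy is to construct a matrix so that the new $A-p(X)$ (or $A_\epsilon-p(X)$) is positive semidefinite with one-dimensional kernel spanned by $v$, and then invoke Lemma \ref{lem:closint} to get the boundary statement; once that is done, the fact that the kernel is $\mathbb R v$ makes Proposition \ref{prop:maintool} immediately applicable.

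For the first assertion I would normalize $v$ to a unit vector. Because $p(X)$ is symmetric and $p(X)v=\lambda v$, one has $\langle p(X)w,v\rangle=\lambda\langle w,v\rangle=0$ for every $w\perp v$, so $p(X)$ is block diagonal with respect to the orthogonal decomposition $\mathbb R^n=\mathbb R v\oplus v^\perp$. Letting $P$ denote the orthogonal projection onto $v^\perp$, I would set
\[
  A=\lambda vv^{*}+Pp(X)P+tP
\]
with $t>0$ chosen so large that $Pp(X)P+tP$ is positive definite on $v^\perp$. In block form relative to $\mathbb R v\oplus v^\perp$, $A$ is diagonal with blocks $\lambda>0$ and $p(X)|_{v^\perp}+tI$, so $A\succ 0$; meanwhile $A-p(X)=tP$ is positive semidefinite with kernel precisely $\mathbb R v$. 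Lemma \ref{lem:closint} then places $X$ in the boundary of $\cD(A)$, and Proposition \ref{prop:maintool} applies to $(A,X,v)$.

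For the second assertion, let $K=\ker(A-p(X))$. By hypothesis $K$ contains $v$; if $K=\mathbb R v$ take $A_\epsilon=A$, so assume $\dim K\ge 2$. Let $K'$ be the orthogonal complement of $\mathbb R v$ inside $K$, let $Q$ be the orthogonal projection onto $K'$, and set $A_\epsilon=A+\delta Q$ for any $0<\delta<\epsilon$. Then $\|A-A_\epsilon\|=\delta<\epsilon$, and $A_\epsilon\succ 0$ as the sum of a positive definite and a positive semidefinite operator. With respect to the orthogonal decomposition $\mathbb R^n=\mathbb R v\oplus K'\oplus K^\perp$, the operator $A_\epsilon-p(X)$ is block diagonal with diagonal blocks $0$, $\delta I_{K'}$, and the positive definite restriction of $A-p(X)$ to $K^\perp$; so it is positive semidefinite with kernel exactly $\mathbb R v$. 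Lemma \ref{lem:closint} then gives $X\in\partial \cD(A_\epsilon)$.

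There is no real obstacle: both constructions are linear-algebraic perturbations. The only point requiring a small amount of care is in the first assertion, where strict positivity of $\lambda$ — not merely $\lambda\ge 0$ — is essential in order to guarantee that the $\mathbb R v$-block of $A$ is strictly positive, so that $A$ itself is positive definite rather than only semidefinite.
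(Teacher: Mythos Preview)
Your proof is correct and follows essentially the same approach as the paper: block-decompose relative to $\mathbb R v\oplus v^\perp$, adjust $A$ so that $A-p(X)$ is positive semidefinite with kernel $\mathbb R v$, and invoke Lemma~\ref{lem:closint}. The only cosmetic differences are that in the first part the paper takes the $v^\perp$-block of $A$ to be a scalar $\mu I$ rather than $T+tI$, and in the second part the paper perturbs on all of $v^\perp$ (setting $A_\epsilon=A+\epsilon P$) rather than just on $K'$ as you do; both choices work equally well.
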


\begin{proof}
  With respect to the decomposition of $\mathbb R^n$
  as $[v]\oplus [v]^\perp$,
\[
  p(X) = \begin{pmatrix} \lambda & 0 \\ 0 & T \end{pmatrix}
\]
 for some symmetric matrix $T$.  Choose $\mu>0$ so that $\mu-T\succ 0$
 and let
\[
  A=\begin{pmatrix} \lambda & 0 \\ 0 & \mu \end{pmatrix}.
\]
  In particular,
\[
  A-p(X) = \begin{pmatrix} 0 & 0 \\ 0 & \mu - T\end{pmatrix}
\]
 is positive semidefinite with one dimensional kernel spanned
 by $v$. From Lemma \ref{lem:closint}, $X$ is in the boundary
 of $\cD(A)$.

 As for the further statement, diagonalize
 with respect to same orthogonal decomposition of $\mathbb R^n$
 as above
\[
  A-p(X) = \begin{pmatrix} 0 & 0 \\ 0 & T \end{pmatrix},
\]
 for some  positive semidefinite $T$. Let $P$ denote
 the projection onto $[v]^\perp$ and let $A_\epsilon = A+\epsilon P$,
 then
\[
 A_\epsilon - p(X) = \begin{pmatrix} 0 & 0 \\ 0 & \epsilon + T \end{pmatrix}
\]
 and the result follows.
\end{proof}

\section{Direct Sums and Linear Independence}
\label{sec:DSLI}
  As in Section \ref{sec:SoS},
  let  $\cK(n)$ denote the set of those $X\in\smatng$
  such that  $p(X)$ has a positive eigenvalue.
  From here on, it is assumed that $-p$ is not a sum of squares.
  Equivalently, $\cK(m)$ is not empty for some $m$.

  Let $\hcK(n)$ denote the set of pairs $(X,v)$
  such that $X\in\cK(n)$ and $v$ is an eigenvector
  of $p(X)$ corresponding to a positive eigenvalue.
  By Lemma \ref{lem:generic},  if $(X,v)\in\hcK(n)$,
  then there exists a positive definite $A\in\smatn$
  such that $X$ is in the boundary
  of $\cD(A)$ and the kernel of $A-p(X)$ is spanned by $v$.
  Let $\langle x\rangle_k$ denote the set of words of length at most $k$.

\begin{lemma}
 \label{lem:preind}
  Fix a positive integer $k$.
  Given $X\in\smatng$ and $v\in\mathbb R^n$, there is
  a nonzero $q\in\Rx$ of degree at most $k$ such that
    $q(X)v=0$ if and only if the set
  $\{w(X)v : w\in \langle x \rangle_k\}$ is linearly dependent.

  If $q\in\Rx$ and $q(X)v=0$ for all $n$ and $(X,v)\in\hcK(n)$, then
  $q=0$.
\end{lemma}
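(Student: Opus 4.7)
The first equivalence is essentially a bookkeeping exercise. Write a candidate polynomial of degree at most $k$ as $q = \sum_{w\in\langle x\rangle_k} q_w\, w$, and evaluate to get $q(X)v = \sum_{w\in\langle x\rangle_k} q_w\, w(X)v$. Then $q(X)v = 0$ with $q\ne 0$ translates word-for-word into a nontrivial linear relation among the vectors $\{w(X)v : w\in\langle x\rangle_k\}$ (the coefficients of the relation being the $q_w$). So I would dispose of the first statement in a line or two by this translation, in both directions.

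For the second (and substantive) statement, the plan is to reduce to Lemma \ref{lem:reduce}. Suppose $q\in\Rx$ satisfies $q(X)v=0$ for every $n$ and every $(X,v)\in\hcK(n)$. Fix any $n$ and any $X\in\cK(n)$. By the definition of $\cK(n)$, there exists a nonzero $v\in\mathbb R^n$ and a $\lambda>0$ with $p(X)v=\lambda v$, and then by the definition of $\hcK(n)$ the pair $(X,v)$ lies in $\hcK(n)$. Hence the hypothesis gives $q(X)v=0$ with $v\ne 0$, which forces $\ker q(X)\ne 0$, and therefore $\det(q(X))=0$. Since $X\in\cK(n)$ was arbitrary, $\det(q(X))=0$ for every $n$ and every $X\in\cK(n)$.

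Now Lemma \ref{lem:reduce} applies directly (recall the standing assumption at the top of Section \ref{sec:DSLI} that $-p$ is not a sum of squares), yielding $q=0$. This completes the proof.

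I do not anticipate a real obstacle here; the substantive content was already packaged into Lemma \ref{lem:reduce}, and the only thing this lemma needs to add is the observation that vanishing on a vector at every point of $\hcK(n)$ forces singularity of $q(X)$ on all of $\cK(n)$. The one place to be slightly careful is remembering that the statement quantifies over all $n$, so that the singularity of $q(X)$ has to be established at every size before invoking Lemma \ref{lem:reduce}; this is built into the hypothesis since $\hcK(n)$ is defined for every $n$.
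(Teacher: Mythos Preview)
Your proposal is correct and follows essentially the same route as the paper's own proof: the first equivalence is dismissed as immediate, and the second statement is obtained by observing that the hypothesis forces $\det(q(X))=0$ for every $n$ and $X\in\cK(n)$, after which Lemma~\ref{lem:reduce} (together with the standing assumption that $-p$ is not a sum of squares) gives $q=0$. You have simply spelled out the one-line deduction in the paper a bit more carefully.
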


\begin{proof}
  The first statement is evident.  As for the second, the hypotheses
  imply that $\det(q(X))=0$ for each $n$ and $X\in\cK(n)$. Hence
  by Lemma \ref{lem:reduce}, $q=0$.
\end{proof}

\begin{lemma}
 \label{lem:ind}
   Let $d$ denote the degree of $p$.
   Given a positive integer $N$, there
   exists an $n \ge N$ and a pair $(X,v)$ with $X \in \smatng$
   and 
    $v \neq 0$ in $\R^n$ such that
 \begin{enumerate}[(i)]
   \item
       there is a subspace $\cH$ of $\cT(X,v)$ of
       codimension at most one
       such that, for all $H\in \cH$,
    \[
    \langle p^{\prime\prime}(X)[H]v,v \rangle \ge 0;
    \]
  \item
     if $q$ is of degree at most $d-1$
     such that $q(X)v=0$, then $q=0$.
 \end{enumerate}
\end{lemma}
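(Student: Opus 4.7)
Part (i) is automatic for any pair $(X,v) \in \hcK(n)$: Lemma \ref{lem:generic} produces a positive definite $A \in \smatn$ with $X$ on the boundary of $\cD(A)$ and $\ker(A-p(X))$ spanned by $v$, and then Proposition \ref{prop:maintool} furnishes a subspace $\cH \subseteq \cT(X,v)$ of codimension at most one on which $\langle p''(X)[H]v,v\rangle \ge 0$. So the task reduces to producing $(X,v) \in \hcK(n)$ with $n \ge N$ that additionally satisfies (ii).

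For (ii), let $V$ be the finite-dimensional space of polynomials in $\Rx$ of degree at most $d-1$, and for $(X,v) \in \hcK(n)$ set $K(X,v) = \{q \in V : q(X)v = 0\}$; condition (ii) is equivalent to $K(X,v) = \{0\}$. The second part of Lemma \ref{lem:preind} gives $\bigcap_{(X,v) \in \bigcup_n \hcK(n)} K(X,v) = \{0\}$, so since $V$ is finite-dimensional a descending-chain argument produces finitely many pairs $(Y_1,w_1),\dots,(Y_r,w_r) \in \bigcup_n \hcK(n_i)$ with $\bigcap_i K(Y_i,w_i) = \{0\}$: start with any single pair, and whenever the running kernel intersection still contains a nonzero $q$, invoke Lemma \ref{lem:preind} to produce a new witness pair $(Y, w)$ with $q(Y)w \ne 0$ and adjoin it.

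I plan to combine the witnesses by a direct-sum-plus-perturbation argument. After small preliminary perturbations in each $\hcK(n_i)$ one may assume that each $\mu_i$ (the positive eigenvalue of $p(Y_i)$ at $w_i$) is simple and distinct from the spectra of the other $p(Y_j)$. Set $X^0 = \bigoplus_i Y_i \in \smatng$ of total size $n = \sum n_i$; then $e_1 := w_1 \oplus 0 \oplus \cdots \oplus 0$ is a simple eigenvector of $p(X^0) = \bigoplus_i p(Y_i)$ at eigenvalue $\mu_1>0$, so $(X^0, e_1) \in \hcK(n)$. However $K(X^0, e_1) = K(Y_1, w_1)$, which is typically still nonzero. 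To shrink the kernel, apply a small symmetric perturbation $X^\eta = X^0 + \eta H$ for a generic $H \in \smatng$ with nonzero off-block-diagonal entries; spectral perturbation theory provides, for small $\eta > 0$, an eigenvalue $\mu_1(\eta) > 0$ of $p(X^\eta)$ and eigenvector $v^\eta = e_1 + \eta v^{(1)} + O(\eta^2)$ whose block-$i$ components (for $i > 1$) are generically nonzero. This yields $(X^\eta, v^\eta) \in \hcK(n)$, and I pad with a zero block if $n < N$.

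The main obstacle is verifying that $K(X^\eta, v^\eta) = \{0\}$ for small $\eta > 0$ and generic $H$. I expect an order-by-order expansion of $q(X^\eta)v^\eta$ in $\eta$ to do the job: the $\eta^0$ block-$1$ component equals $q(Y_1)w_1$, whose vanishing forces $q \in K(Y_1, w_1)$; for each $i>1$ the $\eta^0$ block-$i$ component vanishes automatically, while the $\eta^1$ block-$i$ component, which combines $q(Y_i)v^{(1)}_i$ with cross terms arising from $q'(X^0)[H]e_1$, vanishes for generic $H$ only when $q \in K(Y_i, w_i)$. Intersecting these conditions yields $q \in \bigcap_i K(Y_i, w_i) = \{0\}$, which establishes (ii).
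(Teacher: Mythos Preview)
Your reduction of (i) to Lemma~\ref{lem:generic} plus Proposition~\ref{prop:maintool}, and the descending-chain argument producing finitely many $(Y_i,w_i)$ with $\bigcap_i K(Y_i,w_i)=\{0\}$, are both correct and match the paper. The gap is in the final combination step. By insisting that the combined pair $(X,v)$ lie in $\hcK(n)$---i.e.\ that $v$ be an eigenvector of $p(X)$ with positive eigenvalue---you are forced into the perturbation argument, and that argument does not go through as sketched. The first-order correction $v^{(1)}_i$ in block $i>1$ is $-(p(Y_i)-\mu_1)^{-1}$ applied to the block-$i$ component of $p'(X^0)[H]e_1$; it depends on $H$, $Y_1$, $Y_i$, and $w_1$, but has no relation whatsoever to $w_i$. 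Thus the order-$\eta$ equation in block $i$,
\[
  q(Y_i)\,v^{(1)}_i + \bigl(\text{block-}i\text{ of }q'(X^0)[H]e_1\bigr)=0,
\]
is a single linear condition on $q$ that need not force the possibly high-codimension condition $q\in K(Y_i,w_i)$. Varying $H$ does not obviously help either, since the nonzero $q$ you are trying to exclude may itself vary with $H$ and $\eta$; you would need a uniform argument you have not supplied.

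The paper sidesteps all of this by dropping the requirement $(X,v)\in\hcK(n)$. Take $X=\bigoplus_j Y_j$ and $v=\bigoplus_j w_j$ (the \emph{full} direct sum of the witness vectors, not $w_1\oplus 0\oplus\cdots$). Since $q(X)v=\bigoplus_j q(Y_j)w_j$, condition (ii) is immediate from $\bigcap_j K(Y_j,w_j)=\{0\}$. For (i), although $v$ is not an eigenvector of $p(X)$, set $A'=\bigoplus_j A_j$ where each $A_j$ comes from the first part of Lemma~\ref{lem:generic} applied to $(Y_j,w_j)$; then $A'-p(X)=\bigoplus_j(A_j-p(Y_j))\succeq 0$ and $(A'-p(X))v=0$, so by Lemma~\ref{lem:closint} $X$ is on the boundary of $\cD(A')$. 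Now the \emph{second} part of Lemma~\ref{lem:generic} perturbs $A'$ to an $A$ whose kernel against $p(X)$ is exactly $[v]$, and Proposition~\ref{prop:maintool} yields (i). Repeating the block enough times (or padding, as you propose) arranges $n\ge N$.
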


\begin{proof}
  Let $\cP$ denote the vector space of polynomials in
  $g$ variables of degree at most $d-1$.
  Given $(Y,w)\in\hcK(n)$, let
\[
  Q(Y,w)=\{q\in\cP: q(Y)w=0\}.
\]
  Thus, $Q(Y,w)$ is a subspace of the finite dimensional vector
  space $\cP$.  Further, by Lemma \ref{lem:preind}
\[
  \cap \{Q(Y,w): (Y,w)\in \hcK(n), \ \ n\in\mathbb N \} = \{0\}.
\]
  Because of finite dimensionality, there
  are positive integers $t$ and $n_1,\dots,n_t$ and
  $(Y^j,w^j) \in \hcK(n_j)$ such that
\[
  \cap_{j=1}^t Q(Y^j,w^j) =\{0\}.
\]
  In particular, if $q\in\Rx$ has degree at most $d-1$ and $q(Y^j)w^j=0$
 for $j=1,\dots,t$,
  then $q=0$.

  Let $Z=\oplus Y^j$ and $z=\oplus w^j$.  Thus $Z$ acts on a space of dimension
  $n^\prime = \sum n_j$. Choose a positive integer $k$ such that
  $n=kn^\prime \ge N$ and let $X=\oplus_1^k Z$ and $v=\oplus_1^k z$.
  From the definition of $\hcK(n)$
  and by Lemma \ref{lem:generic} for each $j$ there is a
  positive definite $A_j\in\mathbb S_{n_j}(\mathbb R)$
  such that $Y^j$ is in the boundary of $\cD(A_j)$ and
   $(A_j-p(Y^j))w^j=0.$ Let $B=\oplus A_j$ and $A^\prime=\oplus_1^k B$.
  Then $(A^\prime-p(X))v=0$ and $A^\prime-p(X)\succeq 0$.
  Moreover, if $q$ has degree at most
  $d-1$ and $q(X)v=0$, then $q=0$.

  Finally, choose a positive definite
  $A\in \smatn$ by the second part of Lemma \ref{lem:generic} such
  that $X$ is in the boundary of $\cD(A)$
  and  the kernel of $(A-p(X))$ is spanned by
  $v$ . In particular, $X$ is in the boundary
  of $\cD(A)$.  The triple $(A,X,v)$ satisfies the hypotheses of
  Proposition
  \ref{prop:maintool}. Hence there is a subspace $\cH$
  of $\cT(X,v)$ of codimension at most one  such that
 \[
   \langle p^{\prime\prime}(X)[H]v,v\rangle \ge 0
 \]
  for all $H\in \cH$.
\end{proof}

  The symmetric polynomial
\[
  r(x)[h] = p^{\prime\prime}(x)[h]
\]
  in the $2g$ variables $(x_1,\dots,x_g,h_1,\dots,h_g)$
  is homogeneous of degree two in $h$.  It admits a representation
  of the form
\[
  r(x)[h] = \begin{bmatrix} V_0(x)[h]^T \cdots V_{d-2}(x)[h]^T \end{bmatrix} Z(x)
       \begin{bmatrix} V_0(x)[h] \\ \vdots \\ V_{d-2}(x)[h]\end{bmatrix}
\]
  where $Z(x)$ is a (uniquely determined square symmetric) matrix of
  free polynomials and $V_j(x)[h]$ is the vector with entries
  $h_\ell w$ over free words $w$ of the the variables $x_1,\dots,x_g$
  of length $j$ and $1\le \ell\le g$.
  (For details see \cite{DHM2}.)  The matrix $\mathcal Z=Z(0)$
  is the \df{middle matrix}.

  \begin{lemma}
   \label{lem:cZ}
   If $\mathcal Z$ is positive semidefinite, then $p$ has degree at most two and
   moreover, $p$ has the form in Equation \eqref{eq:wow}.
  \end{lemma}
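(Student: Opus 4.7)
My plan is to establish the degree bound $\deg(p)\le 2$ by contradiction, and then read off the structural form directly from a factorization of $\mathcal{Z}$. Suppose for contradiction that $d=\deg(p)\geq 3$. Under the representation $r(x)[h]=V(x)[h]^*Z(x)V(x)[h]$, a Hessian monomial of the shape $w_1^*h_{\ell_1}v\,h_{\ell_2}w_2$ appears as the coefficient of the word $v$ inside the $((\ell_1,w_1),(\ell_2,w_2))$ entry of $Z(x)$; thus the middle matrix $\mathcal{Z}=Z(0)$ records exactly those Hessian monomials in which the two $h$'s sit adjacent. The first key step is to note that the diagonal block $\mathcal{Z}[V_{d-2},V_{d-2}]$ vanishes: any contributing monomial would have total degree $2(d-2)+2=2d-2$, exceeding the degree of $p^{\prime\prime}(x)[h]$. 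The standard Cauchy--Schwarz argument for a positive semidefinite matrix with a zero diagonal block then forces the off-diagonal corner block $\mathcal{Z}[V_0,V_{d-2}]$ to vanish as well.

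The next step is to decode this vanishing. The entry $\mathcal{Z}[(a,\emptyset),(b,w)]$ with $|w|=d-2$ equals $2c_{x_a x_b w}$, because the only Hessian monomial of the shape $h_a h_b w$ arises from the single length-$d$ word $x_a x_b w$ via the adjacent-$h$ pair at positions $(1,2)$; no other length-$d$ word and no other position pair produces a monomial that begins with two $h$'s. Hence every coefficient of $p$ on a word of length $d$ is zero, contradicting $\deg(p)=d$. With $\deg(p)\leq 2$ and $p(0)=0$ in hand, write $p(x)=\ell(x)+q(x)$ with $\ell$ homogeneous linear and $q(x)=\sum_{i,j}\alpha_{ij}x_i x_j$ symmetric (so $\alpha_{ij}=\alpha_{ji}$). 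In this range $V(x)[h]$ collapses to $V_0[h]=(h_1,\dots,h_g)^T$ and $\mathcal{Z}$ is the scalar $g\times g$ matrix $2(\alpha_{ij})$; positivity of $\mathcal{Z}$ then makes $(\alpha_{ij})$ positive semidefinite and hence expressible as $\sum_k v_kv_k^T$ over $\R$, so that setting $s_k(x)=\sum_i v_{k,i}x_i$ yields self-adjoint linear polynomials with $q=\sum_k s_k^*s_k$, which is precisely the form \eqref{eq:wow}.

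The delicate part I anticipate is the bookkeeping for the pair-indexing of $\mathcal{Z}$: one must justify that the involution acting on the first coordinate $w_1$, combined with the empty-middle-word condition, yields a bijective correspondence between length-$d$ words appearing in $p$ and entries of the corner block $\mathcal{Z}[V_0,V_{d-2}]$, so that the block's vanishing really annihilates every top-degree coefficient of $p$. Once that dictionary is nailed down, the Cauchy--Schwarz step and the Cholesky factorization at the end are standard linear algebra.
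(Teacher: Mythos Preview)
Your argument is correct and follows essentially the same route the paper sketches (and attributes to \cite{HM2}): the vanishing of the $[V_{d-2},V_{d-2}]$ diagonal block by a degree count is precisely the ``antidiagonal structure'' of $\mathcal{Z}$, and the Cauchy--Schwarz step then kills the $[V_0,V_{d-2}]$ corner, which you correctly decode as $2c_{x_ax_bw}$ to force $\deg p\le 2$. Your endgame, identifying $\mathcal{Z}=2(\alpha_{ij})$ and factoring it, matches the paper's use of $p(x)=\ell(x)+\tfrac12 r(x)[x]$ together with a square-root of $\mathcal{Z}$.
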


   A proof can be found in \cite{HM2}. The idea is that the middle
  matrix $\mathcal Z$ has an {\it antidiagonal} structure which implies,
  if it is positive semidefinite, then its only nonzero entries 
  correspond to $V_0[h]$, which is linear in $h$ and independent 
  of $x$. Thus,
\[
  r(x)[h] = r[h] = V_0[h]^T \cZ V_0[h]
\]
  and it can be shown that $\cZ$ must be positive semidefinite.
  Writing $\cZ$ as a sum of squares and using
\[
   p(x) = \ell(x) +\frac12 r(x)[x]
\]
  expresses $p$ in the form of Equation \eqref{eq:wow}.

%

 The following Lemma is a consequence of Lemma 7.2 from \cite{DHM2}.

\begin{lemma}
 \label{lem:dhm2}
    There is an integer $\nu$ depending only upon the degree $d$
     of the polynomial $p$ and the number $g$ of variables such
     that the following holds. If
 \begin{enumerate}[(i)]
  \item $n \ge \nu$ satisfies $\frac{\nu+1}{n} < 1$;
  \item $X \in \smatng$ and $v\in\mathbb R^n$; and
  \item there exists a subspace $\cH$ of $\cT(X,v)$ of codimension
  at most one such that
  for each $H\in \cH$ Equation \eqref{eq:poshess} holds; and
  \item there does not exist a nonzero polynomial $q$ of
  degree at most $d-1$ satisfying $q(X)v=0$,
 \end{enumerate}
    then $\mathcal Z$ is positive semidefinite.
\end{lemma}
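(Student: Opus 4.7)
The plan is to reduce the statement to Lemma~7.2 of \cite{DHM2}, whose hypotheses I would show are supplied by items (i)--(iv). The argument has three stages: translate the Hessian positivity on $\cH$ into a positivity statement for $Z(X)$ acting on explicit vectors; use the linear independence hypothesis (iv) to certify that this positivity is tested on a subspace of $\R^{nm}$ of controlled codimension; and finally extract $\cZ \succeq 0$ from the antidiagonal structure of $Z(x)$ by quoting \cite{DHM2}.

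First I would substitute $(X,H)$ into the representation
\[
  r(x)[h] = V(x)[h]^T Z(x) V(x)[h]
\]
and pair with $v$ to obtain
\[
  \langle p''(X)[H]v,\,v\rangle = \langle Z(X)\, V(X)[H]v,\; V(X)[H]v\rangle,
\]
where $V(X)[H]v$ is viewed as a vector in $\R^{nm}$ (with $m=m(d,g)$ the number of rows of $V$) and $Z(X)$ as the corresponding $nm\times nm$ block matrix. Hypothesis (iii) then asserts that this quadratic form is nonnegative for every $H\in\cH$.

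Next I would invoke (iv) to show that the image of the linear map $H \mapsto V(X)[H]v$ has codimension in $\R^{nm}$ bounded by a constant depending only on $d$ and $g$. Indeed, the entries of $V(X)[H]v$ are the vectors $H_\ell\, w(X) v$ for $|w|\le d-2$ and $1\le\ell\le g$; hypothesis (iv) together with Lemma~\ref{lem:preind} makes $\{w(X)v : |w|\le d-1\}$ linearly independent, so each coordinate $H_\ell w(X)v$ can be prescribed independently as $H$ ranges over $\smatng$. Restricting $H$ to $\cH$ further shrinks the image by at most $n+1$ dimensions (the codimension of $\cH$ in $\smatng$), and hypothesis (i), for $\nu = \nu(d,g)$ chosen suitably, is the quantitative statement that $\smatng$ is large enough compared to $\R^{nm}$ for this drop to leave $Z(X)$ positive semidefinite on a subspace of $\R^{nm}$ whose codimension is bounded in terms of $d$ and $g$ alone.

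At this point I would quote Lemma~7.2 of \cite{DHM2}, which says exactly that positivity of $Z(X)$ on such a subspace, together with the antidiagonal structure of the middle matrix, forces $\cZ = Z(0) \succeq 0$. The principal obstacle is the dimension bookkeeping in the second stage: one must carefully track how the constraints carving out $\cT(X,v)$ and $\cH$ interact with the substitution map $H \mapsto V(X)[H]v$ and match the resulting codimension in $\R^{nm}$ against the $n$-independent bound tolerated by \cite{DHM2}. This matching is what dictates the specific form of $\nu$ and the quantitative hypothesis~(i).
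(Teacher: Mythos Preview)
The paper does not actually prove this lemma: it simply states, immediately before the lemma, that it ``is a consequence of Lemma~7.2 from \cite{DHM2}'' and provides no further argument. Your proposal is therefore strictly more detailed than the paper's own treatment, and it follows the same route the paper indicates---reducing to Lemma~7.2 of \cite{DHM2} via the representation $\langle p''(X)[H]v,v\rangle = \langle Z(X)\,V(X)[H]v,\,V(X)[H]v\rangle$, using hypothesis~(iv) to control the codimension of the range of $H\mapsto V(X)[H]v$, and then invoking the antidiagonal structure.
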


  To prove Theorem \ref{thm:main} simply observe that the
  existence of an $n$ that satisfies the conditions (i) - (iv)
   of Lemma \ref{lem:dhm2} is guaranteed by Lemma \ref{lem:ind}. The
  conclusion $\cZ$ is positive semidefinite
  combined with Lemma \ref{lem:cZ} now completes
  the proof.



 It remains to prove Theorem \ref{thm:jc}.
 The equivalence of conditions (i) and (ii)
 is the main result of \cite{HM2}.  That (ii)
  implies (iii) is easily checked.
  If (iii) holds, then, by definition,
  $p(x)-p(0)$ is quasi-convex.  Moreover,
   by the second part of Theorem \ref{thm:main},
  condition (iii) implies $p(0)-p(x)$
  is not a nonzero sum of squares. Hence
  (iii) implies (iv).  If (iv) holds,
  then Theorem \ref{thm:main} implies
  $p(x)-p(0)$ is a linear term plus
  a (hermitian) sum of squares of linear polynomials
  and thus (i) holds.

 \end{document}